\theoremstyle{plain}
\newtheorem{theorem}{Theorem}[section]
\newtheorem{lemma}[theorem]{Lemma}
\newtheorem{proposition}[theorem]{Proposition}
\newtheorem{corollary}[theorem]{Corollary}
\theoremstyle{definition}
\theoremstyle{remark}
\newcommand{\C}{\mathbb{C}}
\newcommand{\Z}{\mathbb{Z}}
\newcommand{\T}{\mathbb{T}}
\title{Free~nilpotent~groups are $C^*$-superrigid}
\author[Omland]{Tron Omland}
\address{Department of Mathematics\\University of Oslo\\NO-0316 Oslo\\Norway
\and
Department of Computer Science\\Oslo Metropolitan University\\NO-0130 Oslo\\Norway}
\email{trono@math.uio.no}
\date{April 24, 2019}
\subjclass[2010]{46L05, 20F18}
\keywords{$C^*$-superrigidity, free nilpotent group}
\thanks{The author is funded by the Research Council of Norway through FRINATEK, project no.~240913.}
\begin{document}

\begin{abstract}
The free nilpotent group $G_{m,n}$ of class $m$ and rank $n$ is the free object on $n$ generators in the category of nilpotent groups of class at most $m$.
We show that $G_{m,n}$ can be recovered from its reduced group $C^*$-algebra,
in the sense that if $H$ is any group such that $C^*_r(H)$ is isomorphic to $C^*_r(G_{m,n})$, then $H$ must be isomorphic to $G_{m,n}$.
\end{abstract}

\maketitle

\section*{Introduction}

Group $C^*$-algebras play an important role in the theory of operator algebras.
A natural question to ask, yet not much studied, is to what extent a group can be recovered from its (reduced) group $C^*$-algebra.
The analog problem for group von~Neumann algebras has received some attention in the last few years,
but to this day there are less than a handful of results available, the first one presented in \cite{IPV}.
A group $G$ is called $W^*$-superrigid if it can be recovered from its group von~Neumann algebra $L(G)$,
that is, if $H$ is any group such that $L(H)\cong L(G)$, then $H\cong G$.
The group von~Neumann algebra of any nontrivial countable amenable group with infinite conjugacy classes is isomorphic to the hyperfinite $\textup{II}_1$ factor,
so in general, much of the group structure is lost in the construction.
However, examples of $W^*$-superrigid groups are known to exist, in particular, some classes of generalized wreath products \cite{IPV,BV} and amalgamated free products \cite{CI}.

% Connes' conjecture: icc property (T) groups can be recovered
% Contrast to the $\ell^1$ case; Wendel 51

Inspired by this terminology, a group $G$ is said to be $C^*$-superrigid if $C^*_r(H)\cong C^*_r(G)$ implies that $H\cong G$.
It has been known for some time that torsion-free abelian groups are $C^*$-superrigid \cite{Sch},
and only very recently, it was shown that certain torsion-free virtually abelian groups, so-called Bieberbach groups, are $C^*$-superrigid \cite{KRTW},
providing the first result for nonabelian groups.
In a somewhat different direction, specific examples of amalgamated free products were proven to be $C^*$-superrigid in \cite{CI},
including a continuum of groups that can contain torsion.
Returning to the amenable situation,
it is conjectured that all finitely generated torsion-free nilpotent groups are $C^*$-superrigid,
and important progress towards solving this problem was made in \cite{ER},
where the authors gave a positive answer in the case of nilpotency class $2$.

%it is conjectured that the class of $C^*$-superrigid groups contains all finitely generated torsion-free nilpotent groups,
%and important progress towards solving this problem was made in \cite{ER},
%where all finitely generated torsion-free $2$-step nilpotent groups were shown to be $C^*$-superrigid.

%This concerns the way references \cite{KRTW}, \cite{CI}, \cite{ER} are cited.
%I understand the author wants to emphasize first the C*-superrigidity results obtained through the C*-algebraic methods (e.g. \cite{KRTW}, \cite{ER}) as this is more connected with his work.
%However, in terms of priority, this could be misleading to the non-experts, graduate students, young postdocs or other people who might be interested in the results but don't necessarily work in this area.
%So when all these results are rst cited I would suggest the author follows the chronological order, e.g. \cite{KRTW}, \cite{CI}, \cite{ER}.
%Then he can single out separately \cite{KRTW}, \cite{ER} and \cite{CI}, if he wishes to discuss the differences between them, how they were obtained, etc.
%Incidentally, in connection with the author's remarks and as another contrast point between these results, the amalgams presented in \cite{CI} cover a continuum of groups that can contain torsion.

We remark that there is no known example of a torsion-free group that is not $C^*$-superrigid.
% and there does not seem to be enough knowledge at this point to formulate a good conjecture about which classes of groups that may be $C^*$-superrigid
For more background on the topic, see \cite{KRTW,ER} and references therein.

In this short note, we show that also the free nilpotent groups are $C^*$-superrigid.

\section{Preliminaries and various results}

Let $G$ be a discrete group.
As usual, $C^*(G)$ denotes the full group $C^*$-algebra of $G$,
and we let $g\mapsto u_g$ be the canonical inclusion of $G$ into $C^*(G)$.
The left regular representation $\lambda$ of $G$ on $\ell^2(G)$ is given by $\lambda_g\delta_h=\delta_{gh}$ for all $g,h\in G$,
and the reduced group $C^*$-algebra $C^*_r(G)$ of $G$ is the $C^*$-subalgebra of $B(\ell^2(G))$ generated by the image of $\lambda$.
It follows that $\lambda$ induces a homomorphism of $C^*(G)$ onto $C^*_r(G)$, mapping $u_g$ to $\lambda_g$ for all $g\in G$.
Moreover, it is well-known that if $C^*(G)\cong C^*_r(G)$, then $\lambda$ must be faithful, and in this case, $G$ is called amenable, and we use $\lambda$ to identify $C^*(G)$ with $C^*_r(G)$. % explain more?

The subgroup $G'$ of $G$ generated by all the elements $ghg^{-1}h^{-1}$ for $g,h\in G$ is called the commutator (or derived) subgroup of $G$. % normal subgroup generated by?
It is normal in $G$, and the quotient $G_\textup{ab}=G/G'$ is an abelian group, called the abelianization of $G$.
The group $G_\textup{ab}$ is the largest abelian quotient of $G$, that is, whenever $N$ is a normal subgroup of $G$ and $G/N$ is abelian, $G'\subseteq N$.

Let $\widetilde{\pi}_\textup{ab}\colon C^*(G)\to C^*(G_\textup{ab})$ denote the homomorphism induced by the quotient map $\pi_\textup{ab}\colon G\to G_\textup{ab}$.
Note that $\pi_\textup{ab}$ induces a map $C^*_r(G)\to C^*_r(G_\textup{ab})=C^*(G_\textup{ab})$ if and only if $G'$, or equivalently, $G$ is amenable. % explain more?

For a $C^*$-algebra $A$, the commutator ideal $\mathcal{J}$ of $A$ is the ideal generated by all elements $xy-yx$ for $x,y\in A$.
Let $\phi\colon A\to A/\mathcal{J}$ denote the quotient map.
The Gelfand spectrum $\Gamma_A$ of $A$ is given by
\[
\Gamma_A=\{ \text{nonzero algebra homomorphisms } \gamma\colon A\to\C \}.
\]
For every $\gamma\in\Gamma_A$, we clearly have $\gamma(xy-yx)=0$ for all $x,y\in A$, and thus $\mathcal{J}\subseteq\ker\gamma$.
If $\rho\in\Gamma_{A/\mathcal{J}}$, then $\rho\circ\phi$ belongs to $\Gamma_A$,
and every $\gamma\in\Gamma_A$ defines an element $\rho\in\Gamma_{A/\mathcal{J}}$ given by $\rho(x+\mathcal{J})=\gamma(x)$.
Together, this gives that $\Gamma_{A/\mathcal{J}}=\Gamma_A$. % continuity
Moreover, if $x\notin\mathcal{J}$, then $0\neq\phi(x)\in A/\mathcal{J}$, which is a commutative $C^*$-algebra, so there exists $\rho\in\Gamma_{A/\mathcal{J}}$ such that $\rho(\phi(x))\neq 0$.
That is, $x\notin\ker\rho\circ\phi$, and as explained above, $\rho\circ\phi\in\Gamma_A$.
We conclude that
\begin{equation}\label{intersection}
\mathcal{J}=\bigcap_{\gamma\in\Gamma_A}\ker\gamma.
\end{equation}

% https://mathoverflow.net/questions/19577/the-difference-between-l1g-and-the-reduced-group-c-algebra-c-rg

\begin{lemma}
The commutator ideal $\mathcal{J}$ of $C^*(G)$ coincides with the kernel of $\widetilde{\pi}_\textup{ab}$.
\end{lemma}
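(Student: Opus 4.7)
The plan is to prove the two inclusions separately, using the characterization (\ref{intersection}) of $\mathcal{J}$ as the intersection of kernels of characters on $C^*(G)$.

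For the inclusion $\mathcal{J}\subseteq\ker\widetilde{\pi}_\textup{ab}$, I would simply note that since $G_\textup{ab}$ is abelian, $C^*(G_\textup{ab})$ is commutative. Therefore $\widetilde{\pi}_\textup{ab}(xy-yx)=\widetilde{\pi}_\textup{ab}(x)\widetilde{\pi}_\textup{ab}(y)-\widetilde{\pi}_\textup{ab}(y)\widetilde{\pi}_\textup{ab}(x)=0$ for all $x,y\in C^*(G)$, and since $\ker\widetilde{\pi}_\textup{ab}$ is an ideal this forces $\mathcal{J}\subseteq\ker\widetilde{\pi}_\textup{ab}$.

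For the reverse inclusion $\ker\widetilde{\pi}_\textup{ab}\subseteq\mathcal{J}$, the key step is to show that every character $\gamma\in\Gamma_{C^*(G)}$ factors through $\widetilde{\pi}_\textup{ab}$. Given such a $\gamma$, the map $g\mapsto\gamma(u_g)$ is a group homomorphism from $G$ into the unit circle $\mathbb{T}\subseteq\mathbb{C}$; since $\mathbb{T}$ is abelian, it annihilates the commutator subgroup $G'$ and therefore factors through $G_\textup{ab}$, giving a character $\chi\colon G_\textup{ab}\to\mathbb{T}$. By the universal property of the full group $C^*$-algebra, $\chi$ extends to a character $\rho\in\Gamma_{C^*(G_\textup{ab})}$ with $\rho(u_{\pi_\textup{ab}(g)})=\chi(\pi_\textup{ab}(g))=\gamma(u_g)$, so $\rho\circ\widetilde{\pi}_\textup{ab}$ and $\gamma$ agree on the generating unitaries $u_g$, and therefore everywhere by continuity and linearity.

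Now if $x\in\ker\widetilde{\pi}_\textup{ab}$, then for every $\gamma\in\Gamma_{C^*(G)}$ we have $\gamma(x)=\rho(\widetilde{\pi}_\textup{ab}(x))=0$ with $\rho$ as above. Hence $x\in\bigcap_{\gamma\in\Gamma_{C^*(G)}}\ker\gamma=\mathcal{J}$ by (\ref{intersection}), completing the proof. There is no real obstacle here; the only conceptual point is the correspondence between characters of $C^*(G)$ and characters of $G_\textup{ab}$, which is essentially the universal property of the abelianization combined with the universal property of the full group $C^*$-algebra.
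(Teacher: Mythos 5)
Your proposal is correct and follows essentially the same route as the paper: the easy inclusion from commutativity of $C^*(G_\textup{ab})$, and the reverse inclusion by identifying every character of $C^*(G)$ with one factoring through $\widetilde{\pi}_\textup{ab}$ (via $\operatorname{Hom}(G,\T)=\operatorname{Hom}(G_\textup{ab},\T)$) and invoking \eqref{intersection}. The only difference is that you argue directly where the paper argues contrapositively, which is immaterial.
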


\begin{proof}
First, since $C^*(G_\textup{ab})$ is commutative, $\ker\widetilde{\pi}_\textup{ab}$ must contain all commutators in $C^*(G)$, and thus $\mathcal{J}\subseteq\ker\widetilde{\pi}_\textup{ab}$.
Next, we note that
\[
\Gamma_{C^*(G_\textup{ab})}=\operatorname{Hom}(G_\textup{ab},\T)=\operatorname{Hom}(G,\T)=\Gamma_{C^*(G)}. % as topological space
\]
The second identification is given by $\chi'\mapsto\chi'\circ\pi_\textup{ab}$ for $\chi'\in\operatorname{Hom}(G_\textup{ab},\T)$,
and the inverse by $\chi\mapsto\chi'$ for $\chi\in\operatorname{Hom}(G,\T)$, where $\chi'(g+G')=\chi(g)$.
The last identification is the usual integrated form,
with inverse $\gamma\mapsto\chi$ for $\gamma\in\Gamma_{C^*(G)}$, where $\chi(g)=\gamma(u_g)$;
and the first equality is similar.
Combined, the first and last space is identified via $\gamma'\mapsto\gamma'\circ\widetilde{\pi}_\textup{ab}$ for $\gamma'\in\Gamma_{C^*(G_\textup{ab})}$.

Thus, if $x\notin\mathcal{J}$, then by \eqref{intersection} there is $\gamma\in\Gamma_{C^*(G)}$ such that $\gamma(x)\neq 0$.
Since $\gamma=\gamma'\circ\widetilde{\pi}_\textup{ab}$ for some $\gamma'\in\Gamma_{C^*(G_\textup{ab})}$,
we have $\gamma'(\widetilde{\pi}_\textup{ab}(x))\neq 0$, and hence $x\notin\ker\widetilde{\pi}_\textup{ab}$.
\end{proof}

The following result is due to \cite{Sch}. % \cite[Theorem~8.58]{KM}
\begin{proposition}\label{folklore}
Suppose that $G$ is torsion-free and abelian and let $H$ be any group such that $C^*(H)\cong C^*(G)$.
Then $H\cong G$.
\end{proposition}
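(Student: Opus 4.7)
The plan is to first argue that $H$ must be abelian, then translate the isomorphism of $C^*$-algebras into a homeomorphism of the Pontryagin duals via Gelfand duality, and finally invoke Scheinberg's rigidity result to upgrade this homeomorphism to a topological group isomorphism, at which point Pontryagin duality yields $H \cong G$.

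First, since $G$ is abelian, $C^*(G)$ is commutative, and therefore so is $C^*(H)$ by hypothesis. The canonical map $h\mapsto u_h$ from $H$ into $C^*(H)$ is injective, because composing with the surjection $C^*(H)\to C^*_r(H)$ sends $u_h$ to the left regular unitary $\lambda_h$, and distinct group elements give distinct $\lambda_h$. Since the image of this homomorphism generates a commutative $C^*$-algebra, $u_gu_h=u_hu_g$ for all $g,h\in H$, and injectivity forces $H$ to be abelian.

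Next, writing $\widehat{G}=\operatorname{Hom}(G,\T)$ for the Pontryagin dual, a compact abelian group since $G$ is discrete, one has the natural identifications $C^*(G)\cong C(\widehat{G})$ and $C^*(H)\cong C(\widehat{H})$ as unital commutative $C^*$-algebras (the second using that $H$ is now known to be abelian). The assumed $C^*$-isomorphism then becomes $C(\widehat{H})\cong C(\widehat{G})$, and by Gelfand duality this is the same as a homeomorphism $\widehat{H}\cong\widehat{G}$ of the underlying compact Hausdorff spaces. Moreover, since $G$ is torsion-free, $\widehat{G}$ is connected (the connected component of the identity in $\widehat{G}$ is the annihilator of the torsion subgroup of $G$); consequently $\widehat{H}$ is connected as well, and $H$ is also torsion-free.

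The main obstacle, and the crux of the argument, is to promote the homeomorphism $\widehat{G}\cong\widehat{H}$ to an isomorphism of topological groups, as a priori homeomorphic compact abelian groups need not be isomorphic as groups. This is precisely the content of Scheinberg's theorem \cite{Sch}, which for two locally compact abelian groups whose underlying spaces are homeomorphic yields an isomorphism of topological groups once one rules out the obstructions coming from torsion; in the present connected compact setting, the theorem applies and produces $\widehat{G}\cong\widehat{H}$ as topological groups. Applying Pontryagin duality one final time delivers $G\cong H$, completing the proof.
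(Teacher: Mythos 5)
Your proof is correct and matches what the paper intends: the paper states this proposition without proof, simply attributing it to Scheinberg \cite{Sch}, and your argument (commutativity of $C^*(H)$ plus injectivity of $h\mapsto u_h$ forces $H$ abelian; Gelfand duality turns the $C^*$-isomorphism into a homeomorphism $\widehat{H}\cong\widehat{G}$; torsion-freeness of $G$ gives connectedness, so Scheinberg's rigidity theorem upgrades the homeomorphism to a topological group isomorphism; Pontryagin duality then yields $H\cong G$) is exactly the standard derivation behind that citation.
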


\begin{corollary}\label{commutator}
If $H$ is any group such that $C^*(H)\cong C^*(G)$, then $C^*(H_\textup{ab})\cong C^*(G_\textup{ab})$.
In particular, if $G_\textup{ab}$ is torsion-free, then $H_\textup{ab}\cong G_\textup{ab}$.
\end{corollary}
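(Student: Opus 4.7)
The plan is to combine the preceding lemma with Proposition~\ref{folklore} in a straightforward way. The preceding lemma provides, for any group $G$, an isomorphism
\[
C^*(G)/\mathcal{J}_G \cong C^*(G_\textup{ab}),
\]
where $\mathcal{J}_G$ denotes the commutator ideal of $C^*(G)$; this follows because $\widetilde{\pi}_\textup{ab}$ is surjective (being induced by the surjection $\pi_\textup{ab}\colon G\to G_\textup{ab}$) and has kernel equal to $\mathcal{J}_G$ by the lemma.

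Next, I would observe that the commutator ideal is defined in intrinsic $C^*$-algebraic terms, namely as the (closed, two-sided) ideal generated by the elements $xy-yx$. Consequently, any $*$-isomorphism $\Phi\colon C^*(H)\to C^*(G)$ carries $\mathcal{J}_H$ onto $\mathcal{J}_G$ and therefore descends to a $*$-isomorphism
\[
C^*(H)/\mathcal{J}_H \;\cong\; C^*(G)/\mathcal{J}_G.
\]
Combining this with the first step yields the first assertion, $C^*(H_\textup{ab})\cong C^*(G_\textup{ab})$.

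For the second assertion, I would simply apply Proposition~\ref{folklore} with $G_\textup{ab}$ in place of $G$ and $H_\textup{ab}$ in place of $H$: since $G_\textup{ab}$ is assumed torsion-free and abelian, and since the first part provides $C^*(H_\textup{ab})\cong C^*(G_\textup{ab})$, the proposition gives $H_\textup{ab}\cong G_\textup{ab}$.

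There is really no main obstacle here; the content is entirely packaged in the preceding lemma and in Schikhof's result. The only point that requires a moment's care is the functoriality statement that an arbitrary $*$-isomorphism preserves the commutator ideal, but this is immediate from the algebraic description of $\mathcal{J}$.
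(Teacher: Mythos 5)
Your argument is correct and is essentially the paper's own proof: the paper likewise observes that any $*$-isomorphism preserves the commutator ideal, identifies the resulting quotients with $C^*(H_\textup{ab})$ and $C^*(G_\textup{ab})$ via the preceding lemma, and then invokes Proposition~\ref{folklore}. One small slip: the cited result \cite{Sch} is due to Scheinberg, not Schikhof.
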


\begin{proof}
Any isomorphism $C^*(H)\cong C^*(G)$ takes the commutator ideal of $C^*(H)$ to the commutator ideal of $C^*(G)$,
and thus, the quotients $C^*(H_\textup{ab})$ and $C^*(G_\textup{ab})$ must be isomorphic.
The last statement now follows from Proposition~\ref{folklore}.
\end{proof}

The upper central sequence of $G$, denoted $Z_0 \subset Z_1 \subset Z_2 \subset \dotsb$,
is defined by $Z_0=\{e\}$, $Z_1=Z(G)$, and for all $i\geq 0$,
\[
Z_{i+1}=\{g\in G:[g,h]\in Z_i\text{ for all }h\in G\}.
\]
In particular, we remark that $Z_i$ is a normal subgroup of $Z_{i+1}$ and $Z_{i+1}/Z_i=Z(G/Z_i)$ for all $i\geq 0$.
If there exists an $m$ such that $G=Z_m$, then $G$ is called a nilpotent group,
and the smallest such $m$ is said to be the class of $G$.

% The following argument is inspired by \cite[Theorem~16.2.5]{KM}. % Kargapolov-Merzljakov 16.2.5

\begin{lemma}\label{generating}
Suppose that $G$ is a nilpotent group and let $S\subseteq G$ be a set such that $\pi_\textup{ab}(S)$ generates $G_\textup{ab}$.
Then $S$ generates $G$.
\end{lemma}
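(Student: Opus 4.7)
The plan is to argue by induction on the nilpotency class $m$ of $G$, using the lower central series $G = \gamma_1(G) \supseteq \gamma_2(G) = G' \supseteq \gamma_3(G) \supseteq \dotsb$ defined by $\gamma_{i+1}(G) = [\gamma_i(G), G]$; for a group of class $m$ this terminates with $\gamma_{m+1}(G) = \{e\}$. When $m = 1$ the group $G$ equals its own abelianization, so $S = \pi_\textup{ab}(S)$ already generates $G$ by hypothesis.

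For the inductive step, suppose $m \geq 2$ and set $N = \gamma_m(G)$. By definition $[N, G] = \gamma_{m+1}(G) = \{e\}$, so $N \subseteq Z(G)$; and since $m \geq 2$ one also has $N \subseteq \gamma_2(G) = G'$. The quotient $G/N$ therefore has nilpotency class at most $m - 1$, and because $N \subseteq G'$ the natural surjection $G_\textup{ab} \to (G/N)_\textup{ab}$ is an isomorphism. In particular the image of $S$ in $(G/N)_\textup{ab}$ still generates, so the inductive hypothesis applied to $G/N$ yields $KN = G$, where $K := \langle S \rangle$.

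It remains to show $N \subseteq K$. As a subgroup, $N = [\gamma_{m-1}(G), G]$ is generated by commutators $[x, y]$ with $x \in \gamma_{m-1}(G)$ and $y \in G$. Using $KN = G$, write $x = k_1 z_1$ and $y = k_2 z_2$ with $k_1, k_2 \in K$ and $z_1, z_2 \in N \subseteq Z(G)$. Since $z_1$ and $z_2$ are central, a direct computation collapses to $[x, y] = [k_1 z_1, k_2 z_2] = [k_1, k_2] \in K$. Hence every generator of $N$ lies in $K$, so $N \subseteq K$ and $K = G$.

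There is no deep obstacle here; the only subtlety is choosing the subgroup by which to quotient so that the abelianization is preserved at each inductive step. This forces the choice $N = \gamma_m(G)$ rather than, say, $Z(G)$, which in general need not be contained in $G'$.
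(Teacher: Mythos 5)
Your proof is correct, but it follows a genuinely different route from the paper. You induct on the nilpotency class using the \emph{lower} central series: quotient by $N=\gamma_m(G)$, which is central and contained in $G'$, so the abelianization is untouched and the inductive hypothesis gives $\langle S\rangle N = G$; then the centrality of $N$ makes the generators $[x,y]$ of $N=[\gamma_{m-1}(G),G]$ collapse to commutators of elements of $\langle S\rangle$, forcing $N\subseteq\langle S\rangle$. The paper instead works with the \emph{upper} central series and no induction: it sets $H=\langle S\rangle$, shows each $H_i=HZ_i$ is a subgroup normal in $H_{i+1}$, and derives a contradiction from a maximal $k$ with $H_k\neq G$, since $G/H_k\cong Z_{k+1}/Z_k$ is abelian and hence $H_k\supseteq HG'=G$. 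Your argument is essentially the classical fact that for nilpotent groups $G'$ lies in the Frattini subgroup (non-generators), packaged as an induction on class; it buys a cleaner, quotient-by-quotient structure and avoids the normality verifications, while the paper's argument stays inside $G$ itself and avoids passing to quotients and tracking abelianizations through them. Both are complete and elementary; the only point worth making explicit in your write-up is that the induction is on ``class at most $m$'' (or that $G/\gamma_m(G)$ has class exactly $m-1$), which is immediate.
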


\begin{proof}
Let $m$ be the nilpotency class of $G$, and let $\{e\}=Z_0\subset Z_1\subset\dotsb\subset Z_{m-1}\subset Z_m=G$ be the upper central series of $G$.
Denote by $H$ the subgroup of $G$ generated by $S$.
For $0\leq i\leq m$, set $H_i=HZ_i$.
Then $H_i$ is a subgroup of $G$ and a normal subgroup of $H_{i+1}$ for all $i$.
Indeed, for $h,h'\in H$, $z_i,z_i'\in Z_i$,
\[
(h'z_i)(hz_i')=h'hz_i[z_i^{-1},h^{-1}]z_i' \in HZ_iZ_{i-1}Z_i=HZ_i=H_i
\]
since $[z_i^{-1},h^{-1}]\in Z_{i-1}$.
Moreover, for $h,h'\in H$, $z_i,z_i'\in Z_i$, and $z_{i+1}\in Z_{i+1}$,
\[
(hz_{i+1})(h'z_i)(hz_{i+1})^{-1}=h[z_{i+1},h']h'z_i[z_i^{-1},z_{i+1}]h^{-1} \in HZ_iHZ_iZ_{i-1}H=H_i.
\]
If $H\neq G$, there would exist some $0\leq k<m$ such that $H_k\neq G$ and $H_{k+1}=G$.
Then
\[
G/H_k=H_{k+1}/H_k=HZ_{k+1}/HZ_k\cong Z_{k+1}/Z_k,
\]
where the last identification is the second isomorphism theorem, and the last quotient is abelian. % S=Z_{k+1}, N=H_k
Thus, $H_k$ contains the commutator subgroup $G'$, and therefore also $HG'$.
Since $\pi_\textup{ab}(H)=G_\textup{ab}$, then $HG'$ equals $G$, so % check that HG' is a group; consider gG' \cap H
\[
G=HG'\subseteq H_k \subsetneq G,
\]
which is a contradiction.
Hence, we conclude that $H=G$.
\end{proof}

Note that the abelianization of a torsion-free nilpotent group is not necessarily torsion-free,
so in general, we do not know if it can be recovered from its group $C^*$-algebra.
E.g., consider the index~$2$ subgroup of the Heisenberg group given by
\[
G=
\begin{pmatrix}
1 & 2\Z & \Z \\
0 & 1 & \Z \\
0 & 0 & 1
\end{pmatrix}.
\]
The abelianization of $G$ is $\Z^2\oplus(\Z/2\Z)$, and all generating sets of $G$ have at least three elements.
This is in constrast to the Heisenberg group itself, whose abelianization is $\Z^2$, and which can be generated by two elements.

\section{\texorpdfstring{$C^*$}{C*}-superrigidity of free nilpotent groups}

The free nilpotent group $G_{m,n}$ of class $m$ and rank $n$ is the free object on $n$ generators in the category of nilpotent groups of class at most $m$.
It is defined by a set of generators $\{g_i\}_{i=1}^n$ subject to the relations that all commutators of length $m+1$ involving the generators are trivial,
i.e., $[\dotsb[[g_{i_1},g_{i_2}],g_{i_3}]\dotsb],g_{i_{m+1}}]$ is trivial for any choice of sequence of generators.
For all $m\geq 1$, we have $G_{m,1}\cong\Z$, while $G_{m,n}$ is an $m$-step nilpotent group for every $n\geq 2$.
As an easy example, we mention that $G_{2,2}$ is isomorphic to the Heisenberg group,
and refer to \cite{Tao,Om} for further details about free nilpotent groups.

The group $G_{m,n}$ satisfies the following universal property:
If $H$ is any nilpotent group of class at most $m$ and $h_1,\dotsc,h_n$ are elements in $H$,
there exists a unique homomorphism $G_{m,n}\to H$ mapping $g_i$ to $h_i$ for all $i$.

The abelianization of $G_{m,n}$ is isomorphic to $\Z^n$ and $\pi_\textup{ab}$ maps $g_i$ to the generator $e_i$ of the i'th summand of $\Z^n$.
% since the g_i's generate G_{m,n}, then the image must generate the abelianization; and clearly \Z^n is contained in the abelianization as C is contained in Z_{m-1}

The center $Z(G_{m,n})$ of $G_{m,n}$ is a free abelian group (its rank can be computed, but it is not relevant here),
% sum of ranks of factor groups in upper central series; Hirsch length; dimension of H^2; https://groupprops.subwiki.org/wiki/Free_nilpotent_group
and for $m,n\geq 2$ we have
\begin{equation}\label{class-1}
G_{m,n}/Z(G_{m,n}) \cong G_{m-1,n},
\end{equation}
as seen by mapping generators to generators. % see Szymik 2.1-2.3

\begin{lemma}\label{characterization}
Let $m,n\geq 2$, and let $H$ be a nilpotent group of class at most $m$ that can be generated by $n$ elements.
Suppose that $H/Z(H) \cong G_{m-1,n}$.
Then $H\cong G_{m,n}$.
\end{lemma}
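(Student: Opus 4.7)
The plan is to apply the universal property of $G_{m,n}$ to build a surjection onto $H$ and then argue that its kernel must be trivial.

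First I would fix generators $h_1,\dots,h_n$ of $H$ and invoke the universal property of $G_{m,n}$ (applicable since $H$ has class at most $m$) to obtain a homomorphism $\phi\colon G_{m,n}\to H$ with $\phi(g_i)=h_i$. This $\phi$ is surjective because its image contains the generating set of $H$.

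Next I would descend to the central quotients. Surjectivity of $\phi$ gives $\phi(Z(G_{m,n}))\subseteq Z(H)$, inducing a surjective homomorphism $\bar\phi\colon G_{m,n}/Z(G_{m,n})\to H/Z(H)$. By \eqref{class-1} and the hypothesis, both quotients are isomorphic to $G_{m-1,n}$, so composing with these identifications makes $\bar\phi$ a surjective self-homomorphism of $G_{m-1,n}$. Since finitely generated nilpotent groups are residually finite and hence Hopfian, $\bar\phi$ must be an isomorphism. This forces $\phi^{-1}(Z(H))=Z(G_{m,n})$, and in particular $\ker\phi\subseteq Z(G_{m,n})$; moreover $\phi$ restricts to a surjection $Z(G_{m,n})\to Z(H)$ with kernel $K:=\ker\phi$.

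The main obstacle will then be to conclude $K=\{e\}$. Since $Z(G_{m,n})$ is free abelian of some finite rank $k$, the subgroup $K$ is free abelian of rank $j\le k$, and the goal becomes to show $j=0$. I would attempt a rank (equivalently, Hirsch length) comparison using the two central extensions
\[
1\to Z(G_{m,n})\to G_{m,n}\to G_{m-1,n}\to 1,\qquad 1\to Z(H)\to H\to G_{m-1,n}\to 1,
\]
so that additivity of Hirsch length reduces the claim to $\operatorname{rank}Z(H)=\operatorname{rank}Z(G_{m,n})$. Establishing this equality is the true crux of the argument, and I expect it to follow from the extremality of $G_{m,n}$ among class-$\le m$, $n$-generated groups whose quotient by the center is $G_{m-1,n}$: any such group is a central quotient of $G_{m,n}$ via the map $\phi$ just constructed, and matching the ranks of the centers collapses $K$ to the trivial group, upgrading $\phi$ to an isomorphism.
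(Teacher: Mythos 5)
The first part of your argument is correct and is essentially the paper's: the universal property gives a surjection $\phi\colon G_{m,n}\to H$, Hopficity of the finitely generated nilpotent group $G_{m-1,n}$ makes the induced map on central quotients an isomorphism, whence $\phi^{-1}(Z(H))=Z(G_{m,n})$, $\ker\phi\subseteq Z(G_{m,n})$, and $Z(H)=\phi(Z(G_{m,n}))$. But the remaining step, which you yourself flag as the crux, is a genuine gap: you never prove that $K=\ker\phi$ is trivial, equivalently that $\operatorname{rank}Z(H)=\operatorname{rank}Z(G_{m,n})$. The ``extremality'' sketch is circular: knowing that $H$ is a central quotient of $G_{m,n}$ only gives $\operatorname{rank}Z(H)=\operatorname{rank}Z(G_{m,n})-\operatorname{rank}K$, i.e.\ precisely the possibility you must exclude, and no independent argument for the rank equality is offered.

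Moreover, this gap cannot be closed from the hypotheses as stated, because the statement is then false. Take $m=2$, $n=3$ and $H=G_{2,3}/\langle[g_1,g_2]\rangle$. Here $Z(G_{2,3})=G_{2,3}'$ is free abelian on $[g_1,g_2],[g_1,g_3],[g_2,g_3]$, and a direct computation shows no element outside $G_{2,3}'$ becomes central in $H$: for $x=g_1^{a_1}g_2^{a_2}g_3^{a_3}$ modulo $G_{2,3}'$, the commutators $[x,g_3]=[g_1,g_3]^{a_1}[g_2,g_3]^{a_2}$ and $[x,g_1]=[g_2,g_1]^{a_2}[g_3,g_1]^{a_3}$ lie in $\langle[g_1,g_2]\rangle$ only if $a_1=a_2=a_3=0$. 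Hence $H$ is torsion-free, nilpotent of class $2$, generated by $3$ elements, $Z(H)\cong\Z^2$ and $H/Z(H)\cong\Z^3=G_{1,3}$, yet $H\not\cong G_{2,3}$ since the centers (or Hirsch lengths) differ. It is worth noting that the step where you got stuck is exactly where the paper's own proof is too quick: in the five-lemma diagram the restriction of $\varphi$ to $Z(G_{m,n})$ is labelled an isomorphism, but only its surjectivity onto $Z(H)$ is established, and its injectivity is equivalent to the injectivity of $\varphi$ one is trying to prove (as $\ker\varphi\subseteq Z(G_{m,n})$). So any correct argument must import additional information beyond the lemma's hypotheses --- for instance that $Z(H)$ and $Z(G_{m,n})$ have equal rank, or that $H$ and $G_{m,n}$ have equal Hirsch length --- which in the setting of the main theorem would have to be extracted from the isomorphism $C^*(H)\cong C^*(G_{m,n})$ rather than from the group-theoretic hypotheses alone.
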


\begin{proof}
The universal property of $G_{m,n}$ means that there exists a surjective map $\varphi\colon G_{m,n}\to H$.
Clearly, $\varphi(Z(G_{m,n}))\subseteq Z(H)$, and we set $K=\varphi^{-1}(Z(H))$.
Consider the maps
\[
G_{m,n}/Z(G_{m,n}) \to G_{m,n}/K \to H/Z(H),
\]
given by $aZ(G_{m,n}) \mapsto aK$ and $aK \mapsto \varphi(a)Z(H)$.
The composition map $\psi$ is surjective since $\varphi$ is surjective.
Since finitely generated nilpotent groups are Hopfian, $G_{m-1,n}\cong G_{m,n}/Z(G_{m,n})$ does not have any proper quotient isomorphic to itself. % https://groupprops.subwiki.org/wiki/Noetherian_implies_Hopfian
Hence, the composition map $\psi$ must be an isomorphism, and $K=Z(G_{m,n})$.
We get the following commutative diagram
\begin{displaymath}
\begin{tikzcd}[column sep=large,row sep=large]
1 \arrow{r} & Z(G_{m,n}) \arrow{r}{i}\arrow{d}{\cong}[swap]{\varphi_{|Z}} & G_{m,n} \arrow{r}{q}\arrow{d}[swap]{\varphi} & G_{m,n}/Z(G_{m,n}) \arrow{r}\arrow{d}{\cong}[swap]{\psi} & 1 \\
1 \arrow{r} & Z(H) \arrow{r}{i} & H \arrow{r}{q} & H/Z(H) \arrow{r} & 1 
\end{tikzcd}
\end{displaymath}
By the five lemma, $\varphi$ is an isomorphism.
\end{proof}

\begin{theorem}
For every pair of natural numbers $m$ and $n$,
the free nilpotent group $G_{m,n}$ of class $m$ and rank $n$ is $C^*$-superrigid.
\end{theorem}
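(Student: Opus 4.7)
The plan is to argue by induction on the nilpotency class $m$. The base case $m=1$ reduces directly to Proposition~\ref{folklore}, since $G_{1,n}\cong\Z^n$ is torsion-free abelian; the case $n=1$ is handled identically using $G_{m,1}\cong\Z$. So fix $m,n\geq 2$, assume $G_{m-1,n}$ is already known to be $C^*$-superrigid, and let $H$ be any group with $C^*_r(H)\cong C^*_r(G_{m,n})$. The target is to produce an isomorphism $H\cong G_{m,n}$, for which Lemma~\ref{characterization} provides the natural finishing move.

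First I would upgrade the reduced $C^*$-algebra isomorphism to an isomorphism of full $C^*$-algebras. Since $G_{m,n}$ is nilpotent, hence amenable, $C^*_r(G_{m,n})$ is nuclear, and so is $C^*_r(H)$; by Lance's theorem $H$ is amenable, and $C^*(H)\cong C^*(G_{m,n})$. Corollary~\ref{commutator} now yields $H_\textup{ab}\cong\Z^n$, using that $(G_{m,n})_\textup{ab}\cong\Z^n$ is torsion-free. To apply Lemma~\ref{characterization} to $H$, I need to verify that (a) $H$ is nilpotent of class at most $m$, (b) $H$ is generated by $n$ elements, and (c) $H/Z(H)\cong G_{m-1,n}$. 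Granted (a), any lift to $H$ of a $\Z$-basis of $H_\textup{ab}$ is a generating set by Lemma~\ref{generating}, giving (b). For (c), the natural route is to establish $C^*(H/Z(H))\cong C^*(G_{m,n}/Z(G_{m,n}))=C^*(G_{m-1,n})$ and then invoke the inductive hypothesis.

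The main obstacle is therefore to read off the nilpotency class and the quotient by the center of $H$ purely from its $C^*$-algebra. The natural strategy is to argue that for torsion-free finitely generated nilpotent groups the center is $C^*$-algebraically detectable (for instance, via an identification along the lines of $C^*(Z(G))=Z(C^*(G))$), so that an isomorphism $C^*(H)\cong C^*(G_{m,n})$ must match the central subalgebras and pass to an isomorphism of the quotients by these subalgebras, which on the group side are $C^*(H/Z(H))$ and $C^*(G_{m-1,n})$. Making this rigorous requires first knowing that $H$ itself is a finitely generated torsion-free nilpotent group, which is the delicate part and where one expects to lean on deeper input, such as the class-$2$ results of \cite{ER} and the standard fact that a torsion-free finitely generated nilpotent group is determined up to isomorphism by matching central series data. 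Once (a) and (c) are in place, Lemma~\ref{characterization} delivers $H\cong G_{m,n}$ and closes the induction.
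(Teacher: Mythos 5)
Your overall architecture is the same as the paper's: induct on the class $m$, pass from $C^*_r$ to $C^*$ by amenability/nuclearity, get $H_\textup{ab}\cong\Z^n$ from Corollary~\ref{commutator}, lift a basis and invoke Lemma~\ref{generating} to get $n$ generators, feed $H/Z(H)\cong G_{m-1,n}$ into the induction hypothesis via \eqref{class-1}, and finish with Lemma~\ref{characterization}. The problem is that the two steps you yourself flag as ``the main obstacle'' are exactly the mathematical heart of the theorem, and your proposal does not actually supply them. The paper does not reprove them either; it imports them from Eckhardt--Raum: \cite[Theorem~B]{ER} is a general statement about finitely generated torsion-free nilpotent groups (not a by-product of their class-$2$ superrigidity theorem, as your ``class-$2$ results'' phrasing suggests) and gives that $H$ is finitely generated, torsion-free and nilpotent of class $m$; and the proof of \cite[Lemma~4.2]{ER} gives $C^*(H/Z(H))\cong C^*(G_{m,n}/Z(G_{m,n}))$. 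Without these inputs you cannot even get started on (a)--(c): in particular, nilpotency of $H$ is needed before Lemma~\ref{generating} or Lemma~\ref{characterization} can be applied. Your appeal to a ``standard fact that a torsion-free finitely generated nilpotent group is determined by matching central series data'' is not a standard fact and plays no role in the paper, which instead proves Lemma~\ref{characterization} directly (universal property, Hopficity, five lemma) for the free nilpotent groups.

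The center-detection sketch also has a genuine gap as stated. Even granting $Z(C^*(G))=C^*(Z(G))$ for finitely generated torsion-free nilpotent groups (true, but itself requiring an argument, e.g.\ via the FC-centre and the canonical conditional expectation), an isomorphism $C^*(H)\to C^*(G)$ matching the central subalgebras does not automatically give $C^*(H/Z(H))\cong C^*(G/Z(G))$: one does not quotient by a subalgebra. The algebra $C^*(G/Z(G))$ is the quotient of $C^*(G)$ by the ideal generated by $\{u_z-1:z\in Z(G)\}$, i.e.\ the fibre over the \emph{trivial} character in the decomposition of $C^*(G)$ over $\widehat{Z(G)}$. Your isomorphism only induces a homeomorphism $\widehat{Z(H)}\to\widehat{Z(G)}$, with no a priori reason to carry the trivial character to the trivial character; the fibres over other characters are twisted group algebras of $G/Z(G)$ and are in general not isomorphic to $C^*(G/Z(G))$ (for the Heisenberg group they are rotation algebras). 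Recognizing the trivial fibre intrinsically is precisely the nontrivial content of the Eckhardt--Raum argument the paper cites, so this step cannot be passed over as a formality; either cite it or prove it.
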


\begin{proof}
The case $n=1$ is obvious, so let $n\geq 2$.
We do this by induction on $m$.
Note first that $G_{1,n}\cong\Z^n$, which is $C^*$-superrigid (see Proposition~\ref{folklore}).
Let $m\geq 2$, and suppose that $G_{m-1,n}$ is $C^*$-superrigid.
Let $H$ be any group and assume that $C^*(H) \cong C^*(G_{m,n})$.
It follows from \cite[Theorem~B]{ER} that $H$ is a torsion-free nilpotent group of class $m$. % include proof to be self-contained?

Moreover, $C^*(H/Z(H)) \cong C^*(G_{m,n}/Z(G_{m,n}))$ by \cite[Proof of Lemma~4.2]{ER}, % include proof to be self-contained?
and \eqref{class-1} implies that the latter is isomorphic to $C^*(G_{m-1,n})$.
By the induction hypothesis, the group $G_{m-1,n}$ is $C^*$-superrigid, so $H/Z(H) \cong G_{m-1,n}$.

The abelianization of $G_{m,n}$ is isomorphic to $\Z^n$ and thus $H_\textup{ab}\cong\Z^n$ by Corollary~\ref{commutator}.
For each $1\leq i\leq n$, choose an element $s_i$ of $H$ that is mapped to the generator $e_i$ of $\Z^n\cong H_\textup{ab}$.
If $S=\{s_i:1\leq i\leq n\}$, then $\pi_\textup{ab}(S)$ generates $H_\textup{ab}$, so $S$ generates $H$ by Lemma~\ref{generating},
i.e., $H$ can be generated by $n$ elements.

Therefore, we apply Lemma~\ref{characterization} to conclude that $H\cong G_{m,n}$.
\end{proof}

%\section*{Acknowledgements}

%The author would like to thank Erik B\'edos, Caleb Eckhardt, and Sven Raum for helpful comments.
%The author is funded by the Research Council of Norway through FRINATEK, project no.~240913.


\begin{thebibliography}{99}

\bibitem{BV}
Mihaita Berbec and Stefaan Vaes.
\newblock {$W^*$}-superrigidity for group von {N}eumann algebras of left-right wreath products.
\newblock {\em Proc. Lond.\ Math.\ Soc.\ (3)}, 108(5):1116--1152, 2014.

\bibitem{CI}
Ionut Chifan and Adrian Ioana.
\newblock Amalgamated free product rigidity for group von~Neumann algebras.
\newblock {\em Adv.\ Math.}, 329:819--850, 2018.

\bibitem{ER}
Caleb Eckhardt and Sven Raum.
\newblock {$C^*$}-superrigidity of $2$-step nilpotent groups.
\newblock {\em Adv.\ Math.}, 338:175--195, 2018.

%\bibitem{HM}
%Karl~H.\ Hofmann and Sydney~A.\ Morris.
%\newblock {\em The structure of compact groups}.
%\newblock De Gruyter Stud. Math., vol.\ 25, Walter de Gruyter \& Co., Berlin, 1998.

\bibitem{IPV}
Adrian Ioana, Sorin Popa, and Stefaan Vaes.
\newblock A class of superrigid group von Neumann algebras.
\newblock {\em Ann.\ of Math.\ (2)}, 178(1):231--286, 2013.

%\bibitem{KM}
%Mikhail~I.\ Kargapolov and Iurii~I.\ Merzljakov.
%\newblock {\em Fundamentals of the theory of groups}.
%\newblock Grad. Texts in Math., vol.\ 62, Springer-Verlag, New York-Berlin, 1979.

\bibitem{KRTW}
S{\o}ren Knudby, Sven Raum, Hannes Thiel, and Stuart White.
\newblock On {$C^*$}-superrigidity of virtually abelian groups.
\newblock In preparation.

\bibitem{Om}
Tron Omland.
\newblock $C^*$-algebras generated by projective representations of free nilpotent groups.
\newblock {\em J.\ Operator Theory}, 73(1):3--25, 2015.
%available at \href{https://arxiv.org/abs/1301.2942}{arXiv:1301.2942}.

\bibitem{Sch}
Stephen Scheinberg.
\newblock Homeomorphism and isomorphism of abelian groups.
\newblock {\em Canad.\ J.\ Math.}, 26:1515--1519, 1974.

\bibitem{Tao}
Terence Tao.
\newblock The free nilpotent group.
\newblock Lecture notes, available at \url{http://terrytao.wordpress.com/2009/12/21/the-free-nilpotent-group/}.

%\bibitem{Sch}
%James~G.\ Wendel.
%\newblock On isometric isomorphism of group algebras.
%\newblock {\em Pacific J.\ Math.}, 1:305--311, 1951.

\end{thebibliography}
\end{document}